\newcommand{\RR}{\mathbb{R}}
\newcommand{\NN}{\mathbb{N}}
\newtheorem{theorem}{Theorem}
\newtheorem*{theorem*}{Theorem}
\newtheorem{lemma}{Lemma}
\newtheorem{proposition}{Proposition}
\newtheorem{corollary}{Corollary}
\newtheorem{claim}{Claim}
\newtheorem{definition}{Definition}
\newtheorem{remark}{Remark}
\renewenvironment{proof}[1][]{\noindent {\bf Proof #1:\;}}{\hfill $\Box$}
\title{On the nature of Bregman functions}
\author{Edouard Pauwels\thanks{Toulouse School of Economics, France. Institut universitaire de France (IUF).}}
\begin{document}
\date{Draft of \today}

\maketitle

\begin{abstract}
				Let $C \subset \RR^n$ be convex, compact, with nonempty interior and $h$ be Legendre with domain $C$, continuous on $C$. We prove that $h$ is Bregman if and only if it is strictly convex on $C$ and $C$ is a polytope. This provides insights on sequential convergence of many Bregman divergence based algorithm: abstract compatibility conditions between Bregman and Euclidean topology may equivalently be replaced by explicit conditions on $h$ and $C$. This also emphasizes that a general convergence theory for these methods (beyond polyhedral domains) would require more refinements than Bregman's conditions.
\end{abstract}

\section{Introduction}

Let $C \subset \RR^n$ be convex compact with nonempty interior, $h \colon C \to \RR$ be convex. We assume throughout the article that $h$ is Legendre, has domain $C$, and is continuous on $C$:
\begin{itemize}
		\item $h$ is continuous on $C$\footnote{The domain of $h$ is $C$ and $h$ is not defined outside $C$. $h$ is assumed to be continuous on $C$ equiped with the subspace topology. Equivalently we could consider $h \colon \RR^n \to \RR \cup \{+\infty\}$, with value $+\infty$ outside $C$, in this case $h_C$, the restriction of $h$ to $C$, is assumed to be continuous. Our arguments are limited to sequences in $C$ so that both points of view are equivalent in this work.}.
		\item $h$ is essentially smooth: continuously differentiable on the interior of its domain, $\mathrm{int} C$, such that for all $x \in \mathrm{bd} C$ and $y \in \mathrm{int} C$, $$\lim_{t \to 0,\ t>0} \left\langle \nabla h(x + t(y-x)), y - x \right\rangle = - \infty$$ 
		\item $h$ is strictly convex on the interior of its domain, $\mathrm{int} C$.
\end{itemize}
Note that this corresponds to the classical definition in \cite[Section 26]{rockafellar1970convex} for the pair $(\mathrm{int} C, h)$, and contrary to \cite{rockafellar1970convex}, $C$ is closed. This convention is used throughout the text, the notation $C$ denotes a closed set which is the focus of our interest.
The Bregman divergence associated to $h$ is then for all $y \in C$ and $x \in\mathrm{int} C$, 
\begin{align*}
				D_h(y,x) = h(y) - h(x) - \left\langle \nabla h(x), y-x \right\rangle.
\end{align*}

\subsection{Fej\'erian sequences:}

Let $f \colon C \to \RR$ be convex proper lower semi-continuous. Consider the problem
\begin{align}
    \min_{x \in C} f(x) 
    \label{eq:mainProblem}
\end{align}
and denote by $S \subset C$, the solution set of \eqref{eq:mainProblem}. The following definition is adapted from the Euclidean setting \cite{combettes2001fejer}, see also the extension to Bregman monotone sequences \cite{bauschke2003bregman}, note that we do not put emphasis on monotonicity, but rather on convergence which is sufficient for our purpose. 
\begin{definition}
				Let $(x_k)_{k \in \NN}$ be a sequence in $\mathrm{int} C$, it is called weak $D$-Fej\'er for problem \eqref{eq:mainProblem} if
				\begin{itemize}
								\item All accumulation points are in $S$.
								\item For all $y \in S$, $D_h(y,x_k)$ has a finite limit as $k \to \infty$.
				\end{itemize}
				\label{def:fejerSequence}
\end{definition}
Actually, \Cref{def:fejerSequence} may be given for a general closed convex set $S \subset C$ (see \textit{e.g.} \cite{combettes2001fejer} which may be directly adapted and \cite{bauschke2003bregman}). We stick to the optimization problem \eqref{eq:mainProblem} for simplicity since it already cover a wide range of algorithms.
\subsection{Algorithmic examples}
\label{sec:examples}
We list below some of the most common algorithms exhibiting Bregman Fej\'erian behaviors in convex optimization.
\paragraph{Mirror descent:} the algorithm is due to Nemirovsky \cite{newmirovsky1983problem}.
Assume that $f$ is Lipschitz. Initialize $x_0 \in \mathrm{int}C$ and, given sequence of positive step sizes $(\alpha_k)_{k \in \NN}$, iterate 
\begin{align}
		x_{k+1} &= \nabla h^* ( \nabla h(x_k) - \alpha_k v_k),\\
		v_k			& \in \partial f(x_k).
    \label{eq:mainMirror}
\end{align}
Assuming $\sum_{k=0}^{+\infty} \alpha_k = + \infty$ and $\sum_{k=0}^{+\infty} \alpha_k^2 < + \infty$ the resulting algorithmic sequence is weak $D$-Fej\'er. In this case equation (4.22) in \cite{beck2003mirror} ensures that accumulation points are in $S$ and equation (4.21) combined with [Lemma 2, Section 2.2]\cite{polyak1987introduction} ensures that $D(y,x_k)$ has a limit for all $y \in S$.
\paragraph{Bregman gradient and NoLips algorithm:} Assume that $f$ is $C^1$ on an open set containing $C$ and $h-\alpha f$ is convex for some $\alpha > 0$. 
Initialize $x_0 \in \mathrm{int}C$ and iterate
\begin{align}
    x_{k+1} = \nabla h^* ( \nabla h(x_k) - \alpha \nabla f(x_k)).
    \label{eq:mainAlgo}
\end{align}
In this case, for all $y \in S$, $D_h(y,x_k)$ is non increasing as $k$ grows \cite[Lemma 5]{bauschke2016descent}. This extends to the NoLips algorithm proposed for composite objectives in \cite{bauschke2016descent}.
Similarly, the work presented in \cite{alvarez2004hessian} describes a continuous time variant of Bregman Fej\'erian properties in the context of Hessian-Riemannian gradient flows.

\paragraph{Proximal minimization with $D$-functions:}
Assume that $f$ is proper lower-semicontinuous. Initialize $x_0 \in \mathrm{int} C$ and, given sequence of positive step sizes $(\alpha_k)_{k \in \NN}$, iterate
\begin{align}
		x_{k+1} = \arg\min_{x \in C} \alpha_k f(x) + D_h(x,x_k).
    \label{eq:mainPMD}
\end{align}
In this case, for all $y \in S$, $D_h(y,x_k)$ is non increasing as $k$ grows \cite[Lemma 3.3]{chen1993convergence} and if $\sum_{k=0}^{+\infty} \alpha_k = + \infty$, then the resulting sequence has all its accumulation point in $S$ \cite[Theorem 3.4]{chen1993convergence} so that the resulting sequence is weak $D$-Fej\'er. 

\paragraph{Alternating projection:}
Beyond optimization problem \eqref{eq:mainProblem}, in his foundational paper \cite{bregman1967relaxation}, Bregman considers an alternating projection algorithm generating a weak $D$-Fej\'er sequence to find an element in the intersection of convex sets. 

\subsection{Sequential convergence analysis} 
The weak $D$-Fej\'er property can be used to prove convergence of the sequence similarly as in the Euclidean case, in relation to Opial's Lemma (see \cite{combettes2001fejer} for an overview). This argument is valid provided that the topology encoded by $D_h$ is equivalent to the Euclidean topology. This is always true in the interior of $C$ by strict convexity, but further assumptions need to be made to ensure that this also holds at the boundary. The first condition is 
\begin{align}
				(x_k)_{k \in \NN} \subset \mathrm{int}C,\, y \in C, \qquad D_h(y,x_k) \underset{k \to \infty}{\to} 0 \quad \Rightarrow \quad x_k \underset{k \to \infty}{\to} y \tag{A}
    \label{eq:convA}
\end{align}
We know from continuity of $h$ on its domain that strict convexity of $h$ on the whole domain $C$ (not only the interior) is sufficient for \eqref{eq:convA} \cite[Lemma 2.16]{kiwiel1997free}.
If we assume the opposite implication
\begin{align}
				(x_k)_{k \in \NN} \subset \mathrm{int}C,\, y \in C,	  \qquad x_k \underset{k \to \infty}{\to} y \quad \Rightarrow \quad D_h(y,x_k) \underset{k \to \infty}{\to} 0  \tag{B}
    \label{eq:convB}
\end{align}
then we have the following F\'ejer argument: assume that \eqref{eq:convA} and \eqref{eq:convB} hold true, then any weak $D$-Fej\'er sequence $(x_k)_{k \in \NN}$ (\Cref{def:fejerSequence}) converges. Indeed, an accumulation point $\bar{x}$ exists by compacity, it must be in $S$ by \Cref{def:fejerSequence}, from property \eqref{eq:convA}, up to a subsequence, $D_h(\bar{x},x_k) \to 0$, but \Cref{def:fejerSequence} ensures that $D_h(\bar{x},x_k)$ converges so its limit must be $0$ and property \eqref{eq:convB} ensures that $x_k \to \bar{x}$.

Note that by continuity of $h$ on its domain, condition \eqref{eq:convB} is equivalent to $\left\langle \nabla h(x_k), y - x_k \right\rangle \to 0$ as $x_k \to y$.
Conditions \eqref{eq:convA} and \eqref{eq:convB} date back to Bregman \cite{bregman1967relaxation} and have been extensively considered in the literature \cite{censor1981iterative,censor1992proximal,eckstein1993nonlinear,chen1993convergence,bauschke1997legendre,
kiwiel1997free,bauschke2003bregman,alvarez2004hessian,auslender2006interior,bauschke2016descent,sorin2023continuous} in the same abstract form or with adaptation to broader settings than considered here. 
Given $h$, a Legendre function, continuous on its compact domain $C$, if conditions \eqref{eq:convA} and \eqref{eq:convB} are satisfied, then $h$ is called a Bregman function. 

\subsection{Main results}
We are interested in the following question: How restrictive are conditions \eqref{eq:convA} and \eqref{eq:convB}?
In other words, how much does it take for a continuous Legendre function to be Bregman? We provide the following answer. 
\begin{theorem}
	Let $C \subset \RR^n$ be convex, compact, with nonempty interior and $h$ be Legendre with domain $C$, continuous on $C$. Then: 
	\begin{itemize}
					\item \eqref{eq:convA} holds if and only if $h$ is strictly convex on $C$.
					\item \eqref{eq:convB} holds if and only if $C$ is a polytope.
	\end{itemize}
	\label{th:mainTheorem}
\end{theorem}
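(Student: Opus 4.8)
The plan is to prove the two equivalences separately, in each case using the Fenchel identity
\[
D_h(y,x) = h(y) + h^*(\nabla h(x)) - \langle \nabla h(x), y\rangle, \qquad y\in C,\ x\in\inte C,
\]
together with the fact that, $C$ being compact and $h$ continuous, $h^*$ is a Legendre function finite on all of $\RR^n$ with $\nabla h^*=(\nabla h)^{-1}$ a homeomorphism of $\RR^n$ onto $\inte C$. For the \textbf{first bullet}, the ``if'' part is the quoted result of Kiwiel, so I only treat ``only if''. Assume $h$ is not strictly convex on $C$; then $h$ coincides with an affine map on a nondegenerate segment $[y_0,y_1]\subset C$, hence for every fixed $x\in\inte C$ the map $D_h(\cdot,x)$ is affine on $[y_0,y_1]$, and in particular $D_h(m,x)=\tfrac12 D_h(y_0,x)+\tfrac12 D_h(y_1,x)$ for the midpoint $m$. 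Fix $z\in\inte C$, put $x_s=(1-s)m+sz\in\inte C$ and $\phi(s)=h(x_s)$; convexity gives $\phi(s)-\phi(0)\le s\phi'(s)\le\phi(2s)-\phi(s)$, and continuity of $h$ then forces $s\phi'(s)\to0$, whence $D_h(m,x_s)=\phi(0)-\phi(s)+s\phi'(s)\to0$ as $s\downarrow0$. Since $D_h(y_0,\cdot),D_h(y_1,\cdot)\ge0$ this yields $D_h(y_0,x_s)\to0$ while $x_s\to m\neq y_0$, contradicting \eqref{eq:convA}.

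For the \textbf{second bullet, ``if''}: let $C$ be a polytope. For $y\in\inte C$, \eqref{eq:convB} is immediate from continuity of $\nabla h$ near $y$. For $y\in\partial C$, every $x\in C$ satisfies $x-y\in K$, the (polyhedral) tangent cone of $C$ at $y$, and $K=\mathrm{cone}\{g_1,\dots,g_N\}$ with each $y+s_0g_i\in C$ for a fixed $s_0>0$. Write $x_k-y=\sum_i t_{k,i}g_i$ with $t_{k,i}\ge0$ and $\tau_k:=\sum_i t_{k,i}\to0$ (the minimal conic representation is a piecewise-linear function of $x-y$, continuous and vanishing at $0$). Applying the gradient inequality of $h$ at $x_k$ to the points $y+s_0g_i$ gives $s_0\langle\nabla h(x_k),g_i\rangle\le c_i+A_k$ with $A_k:=\langle\nabla h(x_k),x_k-y\rangle$ and $c_i$ independent of $k$; multiplying by $t_{k,i}$ and summing, $A_k(1-\tau_k/s_0)\le(\max_i c_i)\tau_k/s_0$, so $\limsup_kA_k\le0$. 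On the other hand $A_k\ge h(x_k)-h(y)\to0$ by convexity, hence $A_k\to0$ and $D_h(y,x_k)=h(y)-h(x_k)+A_k\to0$.

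For the \textbf{second bullet, ``only if''}, I prove the contrapositive: if $C$ is not a polytope then \eqref{eq:convB} fails. The construction has a geometric and an analytic half. \emph{Geometric:} there is an exposed point $y$ of $C$, with an exposing outward normal $u$ ($\argmax_{z\in C}\langle u,z\rangle=\{y\}$), whose normal cone $N_C(y)$ is not full-dimensional; then $(\mathrm{span}\,N_C(y))^\perp$ is a nonzero subspace of the tangent cone $T_C(y)$, giving a unit vector $e\perp u$ with $e\in T_C(y)$, i.e.\ $y+t_ke_k\in C$ for some $t_k\downarrow0$, $e_k\to e$. Put $z_k:=y+t_ke_k$, $\delta_k:=\langle u,y-z_k\rangle>0$, $\ell_k:=\|z_k-y+\delta_ku\|$ and $e_k'':=(z_k-y+\delta_ku)/\ell_k$; using $\langle u,e\rangle=0$ one checks $\delta_k,\ell_k\to0$, $e_k''\to e$ and $\delta_k/\ell_k\to0$. \emph{Analytic:} set $\xi_k:=\delta_k^{-1}u+2\ell_k^{-1}e_k''$. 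Since $u\perp e_k''$ and $\delta_k/\ell_k\to0$ one has $\xi_k/\|\xi_k\|\to u$, hence $x_k:=\nabla h^*(\xi_k)\to\argmax_{z\in C}\langle u,z\rangle=\{y\}$; meanwhile $\langle\xi_k,z_k-y\rangle=-1+2=1$, so by the Fenchel identity $D_h(y,x_k)=h(y)+\sup_{z\in C}\big(\langle\xi_k,z-y\rangle-h(z)\big)\ge h(y)-h(z_k)+1\to1$. Thus $x_k\to y$ but $D_h(y,x_k)\not\to0$, as required.

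The genuinely hard step is the geometric one: turning ``$C$ is not a polytope'' into the existence of a single exposed point with non-full-dimensional normal cone. I expect to obtain it from Straszewicz's theorem — a non-polytope has infinitely many exposed points, which accumulate along some normal direction $\bar u$ — followed by an analysis of the exposed face $F_{\bar u}$: if $F_{\bar u}=\{y^*\}$ then $\bar u\notin\mathrm{int}\,N_C(y^*)$ (otherwise $y^*$ would be one of the accumulating points), so $N_C(y^*)$ has a nonzero orthogonal complement inside $T_C(y^*)$; if $\dim F_{\bar u}\ge1$, pass to an exposed point of $F_{\bar u}$, which is exposed in $C$, and conclude by induction on dimension since $\dim N_C(p)\le\dim N_{F_{\bar u}}(p)+(n-\dim F_{\bar u})$. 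The only remaining subtlety is the convergence $\nabla h^*(\xi_k)\to F_u$, which follows from a standard concentration estimate for $\argmax_{z\in C}(\langle t\nu,z\rangle-h(z))$ as $t\to\infty$, $\nu\to u$, using boundedness of $h$ on the compact set $C$.
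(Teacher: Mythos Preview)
Your arguments for the first bullet and for the ``if'' half of the second bullet are correct and in fact more self-contained than the paper's: where the paper invokes the Tseng--Bertsekas upper semicontinuity lemma for directional derivatives (Lemma~\ref{lem:tsengBertsekas1987}) both in Lemma~\ref{lem:kiwielReverse} and in the proof of Proposition~\ref{th:polyhedraSufficient}, you bypass it entirely. For (A)$\Rightarrow$strict convexity you use the elementary squeeze $\phi(s)-\phi(0)\le s\phi'(s)\le\phi(2s)-\phi(s)$ together with affinity of $D_h(\cdot,x)$ on the flat segment; for polytope$\Rightarrow$(B) you use finitely many gradient inequalities along generators of the polyhedral tangent cone. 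These are neat alternatives that avoid an external lemma.

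The ``only if'' half of the second bullet is where your route diverges most from the paper and where there is a genuine gap. The paper works on the primal side: it shows (Lemmas~\ref{lem:explodingBoundary}--\ref{lem:extPtSegment}) that near \emph{any} extreme point $a$ one can find interior points $c$ with $D_h(y,c)$ arbitrarily large for any fixed $y\neq a$, and then uses accumulation of extreme points to violate \eqref{eq:convB}. Your Fenchel-dual construction is elegant and the analytic half is fine, but the geometric input you postulate is too strong and in fact false. You claim that a compact non-polytope always has an exposed point whose normal cone is \emph{not full-dimensional}. Counterexample in $\RR^2$: $C=\mathrm{conv}\big(\{(0,0),(1,0)\}\cup\{(\cos\tfrac1n,\sin\tfrac1n):n\ge1\}\big)$ has infinitely many extreme points, every one of them with a two-dimensional normal cone (in particular $N_C((1,0))$ is the closed fourth quadrant). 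What your analytic half actually needs is weaker --- an exposed $y$ with an exposing $u\in\partial N_C(y)$, equivalently $T_C(y)\cap u^\perp\neq\{0\}$ --- and your own argument in the case $F_{\bar u}=\{y^*\}$ does yield $\bar u\notin\inte N_C(y^*)$, which is precisely this weaker statement; the error is the further jump to ``$N_C(y^*)$ has a nonzero orthogonal complement''. The remaining case $\dim F_{\bar u}\ge1$ is also not handled: an exposed point of the face $F_{\bar u}$ need not be exposed in $C$, so the induction as written does not go through. The dual approach can probably be salvaged by aiming only for the weaker geometric fact, but as stated the proof is incomplete.
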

\Cref{th:mainTheorem}, provides an explicit sufficient condition on $h$ ensuring convergence of all algorithms described in \Cref{sec:examples}. Indeed if $C$ is a polytope and $h$ is strictly convex on $C$, then any weak $D$-Fej\'er sequence converges. Furthermore, Theorem \ref{th:mainTheorem} illustrates the fact that existing Fej\'erian arguments for sequential convergence are only valid for polytopic domains, and convergence analysis for more general domains will require different arguments. 

\Cref{th:mainTheorem} can be obtained by combining \Cref{th:domainPolytope}, \Cref{th:polyhedraSufficient}, \Cref{lem:kiwiel} and \Cref{lem:kiwielReverse}. We make crucial use of \cite[Lemma 1]{tseng1991relaxation}.
Let us mention that Theorem \ref{th:mainTheorem} and all our presentation is limited to compact $C$, but most proof arguments relate to possibly unbounded closed $C$ so that the intermediate results convey information of independent interest for this more general case.
The link between strict convexity and \eqref{eq:convA} is essentially known, one implication is due to Kiwiel \cite{kiwiel1997free} and the reverse implication is connected to \textit{total convexity}, see \cite[Proposition 1.2.6]{butnariu2000totally}. We provide a self contained proof based on a result of \cite{tseng1991relaxation} for completeness. The connection between \eqref{eq:convB} and the polytopic nature of $C$ is the most interesting part of our results and we will start with it. We discuss extensions of our main result in \Cref{sec:extensions}.

\section{Condition \eqref{eq:convB}}

Let us first illustrate failure of \eqref{eq:convB} and its relation with curvature with a simple example, which was described independently in \cite[Example 4.2]{azizian2022rate}.

\subsection{Intuition: incompatibility with curvature}

Set $h \colon x \mapsto  - \sqrt{1 - \|x\|^2}$ on $C \subset \RR^2$, the unit Euclidean ball in the plane. We have $\nabla h \colon x \mapsto x \frac{1}{\sqrt{1 - \|x\|^2}}$. Considering polar coordinates in the plane, $x = (r \cos(\theta), r\sin(\theta))$, with $e_1$ the first basis vector $(1,0)$, we have
\begin{align*}
    h(e_1) - h(x) - \left\langle \nabla h(x), e_1 - x\right\rangle &=-\sqrt{1 - r^2} + \frac{r(r - \cos(\theta))}{\sqrt{1 - r^2}}
\end{align*}
Choosing $\theta(r) = \arccos \left( r-  \sqrt{1 - r^2}\right)$ for $r\geq0$, as $r \to 1$, $x(r) = (r \cos(\theta(r)), r\sin(\theta(r)))$ goes to $e_1$ and $D_h(e_1, x) \to 1$.

\begin{figure}[h]
				\centering
				\includegraphics[width=.4\textwidth]{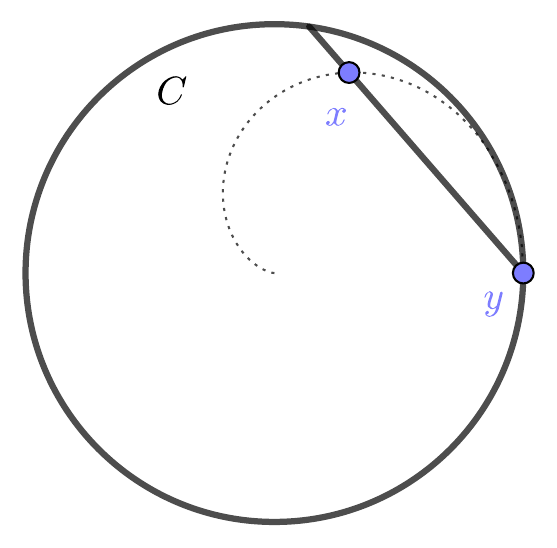}\qquad
				\includegraphics[width=.4\textwidth]{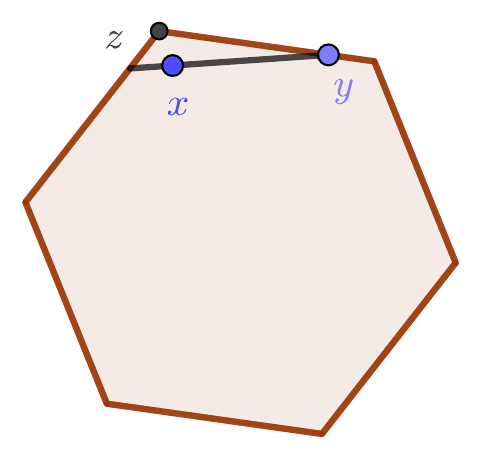}
				\caption{Left: Illustration of the tangential phenomenon, closedness to the boundary and to the opposite extremity of the chord. Right: in general extreme points constitute a region of high curvature and it is possible to find neighboring points in a similar configuration. This cannot happen too much under condition \eqref{eq:convB} and in particular extreme points should not accumulate anywhere.} 
				\label{fig:illustrTangent}
\end{figure}

Actually, the curve $r \to x(r)$ converges to $e_1$ with a vertical tangent as shown in \Cref{fig:illustrTangent}.
This illustrates the main mechanism of failure of condition \ref{eq:convB}. Because of curvature, chords all meet the interior of $C$ so that the directional derivative of $h$ along chords explodes at their endpoints. Here the curve remains on a chord segment in a region of negative directional derivative along the chord.

From this illustration, one intuition is that the boundary of the compact domain $C$ of a Bregman function should not have too much curvature. More precisely it should not have curvature accumulating anywhere. This intuition is actually correct, the key mechanism is that around extreme points, there is a lot of curvature which allows to generate behavior similar to the circle example above, see \Cref{fig:illustrTangent}. This cannot happen too densely otherwise this contradicts \eqref{eq:convB}, and as a result, the extreme points of $C$ have to be isolated, this is expressed in \Cref{th:domainPolytope}.
On the other hand, polyhedra have a very strong structure which will enforce \eqref{eq:convB}  this is \Cref{th:polyhedraSufficient}.

\subsection{First implication}
In this section we prove the following
\begin{proposition}
				Let $C \subset \RR^n$ be convex closed with non-empty interior and $h$ be Legendre, with domain $C$, continuous on $C$ and satisfy \eqref{eq:convB}. Then the extreme points of $C$ are locally finite. In particular if $C$ is bounded it is a polytope.
				\label{th:domainPolytope}
\end{proposition}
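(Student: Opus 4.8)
The plan is to prove the contrapositive: assuming that the extreme points of $C$ are \emph{not} locally finite, I will exhibit a sequence in $\inte C$ violating \eqref{eq:convB}. The last sentence of the statement is then immediate: if $C$ is bounded and has locally finite extreme points, then by compactness it has only finitely many of them, and by the Minkowski/Krein--Milman theorem $C=\mathrm{conv}(\mathrm{ext}\,C)$ is the convex hull of finitely many points, i.e.\ a polytope.

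Setup. If $\mathrm{ext}\,C$ is not locally finite, some bounded region contains infinitely many extreme points, so there is a sequence $(e_j)_{j\in\NN}$ of pairwise distinct extreme points with $e_j\to\bar e$ for some $\bar e\in\bd C$. Passing to a subsequence, $(e_j-\bar e)/\|e_j-\bar e\|\to u$ for a unit vector $u$, and the unit outer normals $\nu_j$ of supporting hyperplanes at the $e_j$ converge to a unit vector $\nu$, which (taking limits in $\langle \nu_j,x\rangle\le\langle\nu_j,e_j\rangle$, $x\in C$) is an outer normal of $C$ at $\bar e$. I would first record the \emph{tangential phenomenon}: from $\langle\nu_j,e_j-\bar e\rangle\ge 0$ one gets $\langle\nu,u\rangle\ge 0$, while from $\langle\nu,e_j-\bar e\rangle\le 0$ one gets $\langle\nu,u\rangle\le 0$, hence $\langle\nu,u\rangle=0$: extreme points accumulate only along directions tangent to a supporting hyperplane at $\bar e$. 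This is exactly what is quantitatively visible in the disk example, and it is the source of the difficulty, since the offending quantity $\langle\nabla h(x),\bar e-x\rangle$ will pair an exploding gradient with a near-tangential, vanishing displacement.

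Core construction. The target for \eqref{eq:convB} will be $y=\bar e$. Fix $p\in\inte C$. For each $j$, along the segment $[p,e_j)\subset\inte C$, \cite[Lemma 1]{tseng1991relaxation} gives $\langle\nabla h(x),e_j-p\rangle\to+\infty$ as $x\to e_j$ along the segment, so by sliding $x$ towards $e_j$ we can make $\|\nabla h(x)\|$ as large as we wish; moreover, since $\langle\nabla h(x)/\|\nabla h(x)\|,\,z-x\rangle\le (h(z)-h(x))/\|\nabla h(x)\|\to 0$ for all $z\in C$, the directions $\nabla h(x)/\|\nabla h(x)\|$ cluster at outer normals of $C$ at $e_j$, hence (up to a further subsequence) near $\nu_j$. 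Using that $e_j$ is extreme — so that the chord through such an $x$ in the near-tangential direction $u$ has an endpoint close to $e_j$, whose distance from $e_j$ is moreover constrained by the presence of the nearby extreme points $e_{j'}\in C$ — I would, for each $j$, calibrate the depth $\dist(x,\bd C)$ of $x$ (and hence $\|\nabla h(x)\|$) against $\|e_j-\bar e\|$ and $\langle\nu_j,u_j\rangle$ so as to produce $x_j\in\inte C$ with
\begin{align*}
 \langle\nabla h(x_j),\,\bar e-x_j\rangle\ \le\ -\varepsilon ,\qquad x_j\to\bar e ,
\end{align*}
for a fixed $\varepsilon>0$ — mirroring the constant $1$ in the disk example. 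Since $h$ is continuous on $C$, $h(\bar e)-h(x_j)\to 0$, whence $D_h(\bar e,x_j)=h(\bar e)-h(x_j)-\langle\nabla h(x_j),\bar e-x_j\rangle\ge \varepsilon-o(1)$, contradicting \eqref{eq:convB}.

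Main obstacle. The hard part is precisely this calibration: one must match, \emph{uniformly over the scales} $\|e_j-\bar e\|\to 0$, the rate at which the near-tangential chords around the $e_j$ shrink against the rate at which $\|\nabla h(x)\|\to\infty$ as $x\to\bd C$, so that $\|e_j-\bar e\|\,\|\nabla h(x_j)\|\,\langle\nu_j,u_j\rangle$ stays bounded below; it is here that having extreme points at \emph{every} small scale near $\bar e$ is essential, since it supplies a usable ``curved'' chord at each scale. A secondary technicality is that $e_j$ need not be a smooth (exposed, single-normal) boundary point, so one may first need to choose carefully from which side $x_j$ approaches $e_j$ — and, in the degenerate configuration where the relevant segments lie inside $\bd C$, to pass to the minimal face of $C$ containing $\bar e$ and the $e_j$ and argue by induction on dimension, keeping in mind that the trace of $h$ on that face need not be Legendre, so the construction must still be run with interior points of the ambient $C$.
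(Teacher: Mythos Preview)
Your contrapositive strategy and the choice of target $y=\bar e$ (the accumulation point of extreme points) are exactly right, and match the paper. However, the ``calibration'' you identify as the main obstacle is a genuine gap, and in fact it is an unnecessary detour that the paper avoids entirely.

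The difficulty you create for yourself is that you approach $e_j$ along a segment from a fixed interior point $p$, so that $\bar e-x_j=(\bar e-e_j)+(e_j-x_j)$ splits into a vanishing near-tangential piece and a chordal piece, and you then have to balance the blow-up rate of $\|\nabla h\|$ (about which Legendre gives no quantitative information) against the decay of $\|\bar e-e_j\|$ and of the angles $\langle\nu_j,u_j\rangle$. This balance is not available in general, and your proposal does not supply it.

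The paper's route removes this calibration. First, a qualitative lemma: if $x\in\bd C$, $y\in C$, and the open segment $(x,y)$ lies in $\inte C$, then $D_h(y,(1-\lambda)x+\lambda y)\to+\infty$ as $\lambda\to 0$ (the unit gradient directions cluster at $N_C(x)$ while $y-x\in\inte T_C(x)$, so the inner product is strictly negative and the norm explodes). Second, the role of extremality: for each extreme point $e_j\neq\bar e$, either $[e_j,\bar e]$ already meets $\inte C$ and the lemma applies with $x=e_j$, or a short compactness argument shows that for every interior point $b$ sufficiently close to $e_j$ the line through $\bar e$ and $b$ exits $C$ at a boundary point $x$ with $\|x-e_j\|$ as small as desired (failure would produce a nontrivial segment through $e_j$ inside $C$, contradicting extremality). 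Either way one obtains, for each $j$, a point $c_j\in\inte C$ with $\|c_j-e_j\|\le\|e_j-\bar e\|$ and $D_h(\bar e,c_j)\ge j$; hence $c_j\to\bar e$ while $D_h(\bar e,c_j)\to+\infty$, violating \eqref{eq:convB}. No matching of rates, no tangential analysis, and no induction on faces is needed.
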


We start with two lemmas
\begin{lemma}
				Under the assumptions of \Cref{th:domainPolytope}, let $x\in \mathrm{bd} C$ and $y \in C$ be such that $\frac{x + y}{2} \in \mathrm{int} C$. Then $D_h(y, (1-\lambda)x + \lambda y) \to + \infty$ as $\lambda \to 0$.
				\label{lem:explodingBoundary}
\end{lemma}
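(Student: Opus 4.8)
The plan is to reduce the statement to the single fact that the directional derivative of $h$ along the chord $[x,y]$ explodes at the endpoint $x$. Write $u := y-x$ and $x_\lambda := (1-\lambda)x+\lambda y$. Note first that $u\neq 0$, since otherwise $\tfrac{x+y}{2}=x\in\bd C$. I would then record that $x_\lambda\in\inte C$ for every $\lambda\in(0,1]$: the point $x_{1/2}=\tfrac{x+y}{2}$ is interior, $x,y\in C$, and a point strictly between an interior point and a point of $C$ is itself interior; applying this on the segments $[x,x_{1/2}]$ and $[x_{1/2},y]$ covers all $\lambda\in(0,1)$. So $D_h(y,x_\lambda)$ is well defined for small $\lambda$. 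Since $y-x_\lambda=(1-\lambda)u$, the definition of the divergence reads
\[
	D_h(y,x_\lambda)=\big(h(y)-h(x_\lambda)\big)-(1-\lambda)\,\langle\nabla h(x_\lambda),u\rangle .
\]
By continuity of $h$ on $C$, $h(y)-h(x_\lambda)\to h(y)-h(x)$ is bounded, so it suffices to prove $\langle\nabla h(x_\lambda),u\rangle\to-\infty$ as $\lambda\to 0$.

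To get this I would use essential smoothness in two ways. Since $x_\lambda\to x\in\bd C$, essential smoothness gives $\|\nabla h(x_\lambda)\|\to+\infty$. Moreover, every accumulation point $w$ of the unit vectors $\nabla h(x_\lambda)/\|\nabla h(x_\lambda)\|$ (which exist by compactness of the sphere) is an outer unit normal to $C$ at $x$: for $z\in\inte C$ the subgradient inequality at the interior point $x_\lambda$ gives $\langle\nabla h(x_\lambda),z-x_\lambda\rangle\le h(z)-h(x_\lambda)$; dividing by $\|\nabla h(x_\lambda)\|$, the right-hand side tends to $0$ (bounded numerator, diverging denominator), so $\langle w,z-x\rangle\le 0$, and this passes to all $z\in C$ by closure. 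Now $x+\varepsilon u=(1-\varepsilon)x+\varepsilon y\in\inte C$ for $\varepsilon\in(0,1)$, and an outer unit normal $w$ to $C$ at $x$ must satisfy $\langle w,z-x\rangle<0$ at every interior point $z$ (else, taking $z'=z+tw\in C$ for small $t>0$ gives $\langle w,z'-x\rangle=t>0$, a contradiction). Hence $\langle w,u\rangle<0$ for every such $w$, and by compactness $\limsup_{\lambda\to 0}\langle\nabla h(x_\lambda)/\|\nabla h(x_\lambda)\|,u\rangle=:-2\delta<0$. Therefore $\langle\nabla h(x_\lambda),u\rangle\le-\delta\,\|\nabla h(x_\lambda)\|\to-\infty$ for $\lambda$ small, which is what was needed.

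The main obstacle is exactly this directional statement. The convex function $\lambda\mapsto h(x_\lambda)$ has nondecreasing derivative $\lambda\mapsto\langle\nabla h(x_\lambda),u\rangle$, but one-dimensional convexity alone does not force its right limit at $0$ to be $-\infty$ (a priori it could be finite, which would make $D_h(y,x_\lambda)$ converge to a finite value). Essential smoothness only says $\|\nabla h(x_\lambda)\|$ blows up, not in which direction; the normal-cone argument above is what locates the blow-up in a direction strictly opposed to the chord $u=y-x$, and it relies crucially on the hypothesis that $\tfrac{x+y}{2}$ — hence an entire initial segment of the chord — lies in the interior of $C$. Note that condition \eqref{eq:convB} is not used here: only essential smoothness and continuity of $h$ on $C$ enter the argument.
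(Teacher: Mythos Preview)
Your proof is correct and follows essentially the same route as the paper: decompose $D_h(y,x_\lambda)$, use continuity of $h$ for the bounded part, and show $\langle\nabla h(x_\lambda),y-x\rangle\to-\infty$ by combining $\|\nabla h(x_\lambda)\|\to\infty$ with the fact that every accumulation point of the normalized gradients lies in $N_C(x)$ and pairs strictly negatively with $y-x$. The only difference is cosmetic: you prove the normal-cone membership and the strict inequality by hand (gradient inequality divided by the norm, plus the perturbation $z'=z+tw$), whereas the paper quotes \cite[Lemma~4.2]{alvarez2004hessian} and the relation $y-x\in\inte T_C(x)$; your observation that condition~\eqref{eq:convB} is not actually used is also accurate.
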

\begin{proof}
			Note that in this case the open segment $(x,y)$ is contained in the interior of $C$.
			For any $\lambda \in (0,1)$, we have
			\begin{align*}
							&D_h(y, (1-\lambda)x + \lambda y) - h(y) + h((1-\lambda)x + \lambda y) \\
							=\,& - (1-\lambda)\left\langle \nabla h(x + \lambda (y-x)), y-x \right\rangle\\
							=\, &  - (1-\lambda)\left\langle \nabla h\left(x + 2\lambda \left(\frac{y+x}{2}-x\right)\right), 2 \left(\frac{y+x}{2}-x\right)\right\rangle.
			\end{align*}
			Letting $\lambda \to 0$, the right hand side goes to $+ \infty$ by essential smoothness and the result follows because $ h((1-\lambda)x + \lambda y)$ remains bounded by continuity of $h$ on $C$.
			This concludes the proof.
\end{proof}
\begin{remark}
	\Cref{lem:explodingBoundary} holds true if one relaxes continuity of $h$ by boundedness on $C$. This is discussed in \Cref{sec:extensions}
	\label{rem:hbounded}
\end{remark}

\begin{lemma}
				Under the hypotheses of \Cref{th:domainPolytope}, let $a \in \mathrm{bd} C$ be an extreme points and $y \in C$, different from $a$. Then for any $\epsilon,K > 0$, there exists $c \in \mathrm{int} C$ such that 
				\begin{align*}
								&\|c - a\|\leq \epsilon \\
								&D_h(y, c)  \geq K.
				\end{align*}
				\label{lem:extPtSegment}
\end{lemma}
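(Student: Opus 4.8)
The plan is to reduce the statement to \Cref{lem:explodingBoundary}: that lemma already produces, along the open segment from $y$ to \emph{any} boundary point $x$ with $\frac{x+y}{2}\in \mathrm{int} C$, points at which $D_h(y,\cdot)$ is arbitrarily large. So it is enough to produce one such boundary point $x$ within distance $\epsilon/2$ of $a$ and then slide a controlled amount along $[x,y]$ toward $x$; the only real work is exhibiting $x$ near $a$.

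To build $x$, I would fix $w\in \mathrm{int} C$ and, for small $t>0$, set $a_t=(1-t)a+tw\in \mathrm{int} C$; since $a\ne y$ we have $a_t\ne y$ for $t$ small. I first observe that $a-y$ is not a recession direction of $C$: otherwise $2a-y=y+2(a-y)\in C$, and then $a=\frac{1}{2} y+\frac{1}{2}(2a-y)$ would be a strict convex combination of the two distinct points $y,\,2a-y$ of $C$, contradicting that $a$ is extreme (when $C$ is bounded this is automatic). As the recession cone is closed, $a_t-y$ is not a recession direction for $t$ small either, so the ray $\{y+r(a_t-y):r\ge 0\}$ leaves $C$ at a well-defined point $q_t=y+r_t(a_t-y)\in \mathrm{bd} C$, with $r_t>1$ since $a_t$ is interior. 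Because $a_t$ is an interior point lying strictly between $y\in C$ and $q_t\in C$, every point of the open segment $(y,q_t)$ is interior, so $(y,q_t)\subset \mathrm{int} C$ and in particular $\frac{y+q_t}{2}\in \mathrm{int} C$ — exactly the hypothesis of \Cref{lem:explodingBoundary} at the boundary point $q_t$.

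The crux is the convergence $q_t\to a$ as $t\to 0$. First $r_t$ stays bounded: if $r_t\to\infty$ along a subsequence then $q_t/r_t=y/r_t+(a_t-y)\to a-y\ne 0$, hence $\|q_t\|\to\infty$ and $q_t/\|q_t\|\to (a-y)/\|a-y\|$; since $q_t\in C$ with $C$ closed and convex, this would force $(a-y)/\|a-y\|$ into the recession cone of $C$, which we excluded (for bounded $C$ one has directly $r_t=\|q_t-y\|/\|a_t-y\|\le \mathrm{diam}(C)/\|a_t-y\|$). Hence every accumulation point of $(q_t)$ as $t\to 0$ has the form $y+\bar r(a-y)$ with $1\le \bar r<\infty$; such a point lies in $C$, but the ray $\{y+r(a-y):r\ge 0\}$ contains no point of $C$ with $r>1$ (same convex-combination argument, again using that $a$ is extreme), so $\bar r=1$ and the accumulation point is $a$. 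Thus $q_t\to a$.

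To conclude, given $\epsilon,K>0$: choose $t$ with $\|q_t-a\|\le \epsilon/2$; apply \Cref{lem:explodingBoundary} with the boundary point $q_t$ to obtain $D_h(y,(1-\lambda)q_t+\lambda y)\to+\infty$ as $\lambda\to 0$; then pick $\lambda\in(0,1)$ with $D_h(y,(1-\lambda)q_t+\lambda y)\ge K$ and $\lambda\|y-q_t\|\le \epsilon/2$. The point $c=(1-\lambda)q_t+\lambda y$ lies on $(y,q_t)\subset \mathrm{int} C$, satisfies $\|c-a\|\le \lambda\|y-q_t\|+\|q_t-a\|\le \epsilon$ and $D_h(y,c)\ge K$, as required. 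The step I expect to be the main obstacle is precisely $q_t\to a$: the exit-point map from directions at $y$ is in general discontinuous when $y\in\mathrm{bd} C$, so one cannot simply invoke continuity and must instead bound $r_t$ through the recession cone and identify accumulation points by hand; this is exactly where extremality of $a$ (not merely $a\in\mathrm{bd} C$) enters, consistent with the conclusion being about extreme points.
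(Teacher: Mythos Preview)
Your argument is correct and follows the same overall strategy as the paper: reduce to \Cref{lem:explodingBoundary} by producing a boundary point $x$ close to $a$ with $(y,x)\subset\mathrm{int} C$, then slide toward $x$ along that segment. The implementation differs, however. The paper first disposes of the easy case $[a,y]\cap\mathrm{int} C\neq\emptyset$, and in the remaining case proves by a compactness--contradiction argument on the sphere $\partial B_\epsilon(a)$ that \emph{every} interior point $b$ sufficiently close to $a$ lies on a segment $[y,x]$ with $x\in\mathrm{bd} C\cap B_\epsilon(a)$. You instead treat both cases at once via an explicit one--parameter family $a_t=(1-t)a+tw$, control the exit point $q_t$ of the ray from $y$ through $a_t$ by a recession--cone argument, and show directly that $q_t\to a$. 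Your route is a bit more analytic and handles the possibly unbounded $C$ transparently (the paper's claim absorbs unboundedness into ``the line does not cross $\mathrm{bd}\,C$'', then intersects with $\partial B_\epsilon(a)$); the paper's route gives a slightly stronger intermediate statement (the exit point is close to $a$ for \emph{all} nearby interior points, not just along one path), though only the weaker version is needed for the lemma. Both hinge on exactly the same use of extremality of $a$ to rule out $\bar r>1$.
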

\begin{proof}
				First if $[a,y] \cap \mathrm{int} C \neq \emptyset$, then we are in the conditions of \Cref{lem:explodingBoundary}, which provides the desired result (with $x=a$). We may therefore assume that $[a,y] \subset \mathrm{bd}C$.  

				\begin{claim}
								There exists $\epsilon_1 > 0$ such that for any $b \in \mathrm{int}C \cap B_{\epsilon_1}(a)$, the line from $y$ to $b$ crosses the boundary of $C$ at $x$ such that $\|x - a\| \leq \epsilon$.
				\end{claim}
				\paragraph{Proof of the claim.} We may assume that $\epsilon_1 < \|y-a\|$, so that, since $a \neq y$, the points $y$ and $b$ define a unique line for any $b \in B_{\epsilon_1}(a)$. We also impose that $\epsilon_1 < \epsilon$. Assume toward a contradiction that for all such $\epsilon_1>0$, there is $b \in \mathrm{int}C \cap B_{\epsilon_1}(a)$, such that the line from $y$ to $b$ either does not cross the boundary of $C$ or it crosses it at a point at distance greater than $\epsilon$. In both cases, since $b \in B_\epsilon (a)$ (as we assumed $\epsilon_1 < \epsilon$), the line from $y$ to $b$ exits $B_\epsilon(a)$ at a point $x \in C$. In this case, using a vanishing sequence of values for $\epsilon_1$, we can produce a sequence $(b_k)_{k \in \NN}$ converging to $a$ such that each segment $[y,b_k]$ can be extended up to $x_k \in C$ where $b_k \in [x_k,y]$ and $\|x_k - a\|= \epsilon$. Passing to the limit, up to subsequences $b_k \to a$ and $x_k \to x \in C$, we have $a \in [x,y]$ where $a \neq y$ and $a \neq x$ which contradicts the fact that $a$ is an extreme point of $C$. This proves the claim.

				We may choose $\epsilon_1 \leq \epsilon$, now consider any point $b \in \mathrm{int}C \cap B_{\epsilon_1}(a)$, $b$ belongs to a segment of the form $[x,y]$ for some $x \in \mathrm{bd} C \cap B_{\epsilon}(a)$. Note that $[b,x] \subset B_{\epsilon}(a)$ so that we can apply \Cref{lem:explodingBoundary} and obtain $c \in [b,x]$ with $D_h(y,c)$ arbitrarily large. This proves the desired result.

\end{proof}

\begin{proof}[of \Cref{th:domainPolytope}]
				Assume that the set of extreme points of $C$ it is not locally finite. This means that we can find a bounded sequence $(z_k)_{k\in\NN}$ of pairwise distinct extreme points. This sequence has a converging subsequence, let $y$ be its limit. Since the sequence has pairwise distinct elements, there is at most one $k \in \NN$ such that $y = z_k$ and we may remove it from the sequence. In other words $z_k \to y \in C$ as $k \to \infty$ and $z_k \neq y$ for all $k \in \NN$.
				We will show that in this case, condition \ref{eq:convB} is violated. 

				Fix $k \in \NN$, using \Cref{lem:extPtSegment} with $a=z_k$, we may find $x_k \in \mathrm{int}C$ such that $\|x_k - z_k\| \leq \|z_k - y\|$ and $D_h(y,x_k) \geq k$. Now as $k \to \infty$, we have $x_k \to y$ and $D_h(y,x_k) \to + \infty$ which contradicts condition \ref{eq:convB}. This concludes the proof. 
\end{proof}

\subsection{Reverse implication}
\begin{proposition}
				Let $C \subset \RR^n$ be convex closed with non-empty interior and locally polyhedral (represented locally by finitely many affine inequalities, for example a polytope), let $h$ be Legendre, with domain $C$, continuous on $C$, then \eqref{eq:convB} holds.
				\label{th:polyhedraSufficient}
\end{proposition}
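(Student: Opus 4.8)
\emph{Proof strategy.} The plan is to start from the reformulation recorded just before the statement: since $h$ is continuous on $C$, for any $(x_k)_{k\in\NN}\subset\mathrm{int}C$ with $x_k\to y\in C$ we have $h(x_k)\to h(y)$, so that $D_h(y,x_k)=\bigl(h(y)-h(x_k)\bigr)-\langle\nabla h(x_k),y-x_k\rangle$ and it suffices to prove $\langle\nabla h(x_k),y-x_k\rangle\to 0$. If $y\in\mathrm{int}C$ this is immediate, since $\nabla h$ is continuous at $y$ (hence $\nabla h(x_k)$ stays bounded) while $y-x_k\to 0$. So the entire content lies at a boundary point $y\in\mathrm{bd}C$, and this is precisely where local polyhedrality enters.

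Fix such a $y$. By local polyhedrality $C$ is described on a neighborhood of $y$, which we may take to be a ball $B_\delta(y)$, by finitely many affine inequalities; discarding those inactive at $y$ (they hold strictly on a possibly smaller ball) we may assume $C\cap B_\delta(y)=(y+K)\cap B_\delta(y)$, where $K=T_C(y)=\{d:\langle a_i,d\rangle\le 0,\ i\in I\}$ is the polyhedral tangent cone, a full-dimensional cone since $C=\overline{\mathrm{int}C}$. Passing to interiors (and using that $B_\delta(y)$ is open) gives $\mathrm{int}C\cap B_\delta(y)=(y+\mathrm{int}K)\cap B_\delta(y)$. The decisive feature of the polyhedral case, which is exactly what fails for the disk of the introductory example, is that the room to extend a chord beyond $y$ does not shrink: because $\mathrm{int}K$ is a convex cone, for \emph{every} unit vector $u\in\mathrm{int}K$ the whole open segment $\{\,y+tu:0<t<\delta\,\}$ lies in $\mathrm{int}C$, with $\delta$ depending only on $y$.

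Now take $k$ large enough that $x_k\in B_\delta(y)$, so $x_k-y\in\mathrm{int}K$; set $\rho_k=\|x_k-y\|>0$ and $u_k=(x_k-y)/\rho_k\in\mathrm{int}K$, and consider the one–dimensional function $\phi_k(t)=h(y+tu_k)$. By the previous paragraph $\phi_k$ is convex and continuous on $[0,\delta)$ with $\phi_k(0)=h(y)$, and, by essential smoothness, continuously differentiable on $(0,\delta)$ with $\phi_k'(t)=\langle\nabla h(y+tu_k),u_k\rangle$. For $k$ large enough that $2\rho_k<\delta$, monotonicity of the difference quotients of the convex function $\phi_k$ yields
\[
2\bigl(\phi_k(\rho_k)-\phi_k(\rho_k/2)\bigr)\ \le\ \rho_k\,\phi_k'(\rho_k)\ \le\ \phi_k(2\rho_k)-\phi_k(\rho_k).
\]
Since $y+\rho_k u_k=x_k$, while $y+(\rho_k/2)u_k\to y$ and $y+2\rho_k u_k\to y$ as $k\to\infty$, continuity of $h$ on $C$ forces $\phi_k(\rho_k/2),\phi_k(\rho_k),\phi_k(2\rho_k)$ all to converge to $h(y)$; hence both bounds above tend to $0$ and $\rho_k\,\phi_k'(\rho_k)\to 0$. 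Finally $\rho_k\,\phi_k'(\rho_k)=\langle\nabla h(x_k),x_k-y\rangle=-\langle\nabla h(x_k),y-x_k\rangle$, so $\langle\nabla h(x_k),y-x_k\rangle\to 0$ and therefore $D_h(y,x_k)\to 0$. (The one–variable squeeze used here is of the type recorded in \cite[Lemma 1]{tseng1991relaxation}, which could be invoked directly in place of the displayed inequality.)

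I expect the only real friction to be the routine bookkeeping of the second paragraph: reducing the local description of $C$ to the constraints active at $y$, verifying carefully that $y+\mathrm{int}K$ indeed sits inside $\mathrm{int}C$ near $y$ (so that $\delta$ does not depend on $k$ and $\phi_k$ is genuinely $C^1$ on all of $(0,\delta)$), and the harmless treatment of the finitely many initial indices with $x_k\notin B_\delta(y)$ and of the case $y\in\mathrm{int}C$. The analytic core — the convexity squeeze on $\phi_k$ combined with continuity of $h$ at $y$ — is short.
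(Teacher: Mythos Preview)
Your proposal is correct and follows essentially the same route as the paper: both reduce to showing $\langle\nabla h(x_k),y-x_k\rangle\to 0$, use local polyhedrality to guarantee that $2x_k-y\in C$ for $x_k$ close to $y$ (the paper states this as \Cref{lem:legendrePolytopeStrat}, you phrase it via the tangent cone $K$), and then squeeze the directional derivative using convexity along the segment (the paper cites Tseng--Bertsekas, you write out the one-variable secant inequality directly, and you yourself note the equivalence). The only difference is packaging: the paper modularizes into two lemmas while you unwind both inline.
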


The result follows from the following (\cite[Lemma 1]{tseng1991relaxation}).
\begin{lemma}[Tseng and Bertsekas]
	Let $h \colon \RR^p \mapsto \RR \cup +\infty$, be lower semicontinuous and continuous on its domain	$\mathrm{dom}\ h$. Then
	\begin{itemize}
		\item For any $y \in \mathrm{dom}\ h$, there exists a nondegenrate closed ball centered at $y$ such that $\mathrm{dom}\ h \cap B$ is closed.
		\item For any $y \in \mathrm{dom}\ h$, and $d$ such that $y + d \in \mathrm{dom}\ h$ and sequences $x_k \to y$ and $d_k \to d$ such that $x_k + d_k \in \mathrm{dom}\ h$ for all $k$, we have
			\begin{align*}
				\lim\sup_{k \to \infty} h'(x_k,d_k) \leq h'(y,d).
			\end{align*}
			where $h'(y,d) = \lim_{t \to 0,\ t>0} \frac{h(y+td) - h(y)}{t}$ for any $y \in \mathrm{dom}\ h$ and $d \in \RR^p$.
	\end{itemize}
	\label{lem:tsengBertsekas1987}
\end{lemma}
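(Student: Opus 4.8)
\emph{Overall approach.} The plan is to treat the two bullet points independently, deriving each as an elementary consequence of lower semicontinuity together with continuity of $h$ on its domain; for the second bullet I would additionally use convexity of $h$ (the relevant case here, since $h$ is Legendre) through the monotonicity of convex difference quotients. Throughout I write $h'(x,d) = \lim_{t \downarrow 0}\frac{h(x+td)-h(x)}{t}$ for the one-sided directional derivative, which for convex $h$ exists in $[-\infty,+\infty)$ and coincides with $\inf_{t>0}\frac{h(x+td)-h(x)}{t}$.

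\emph{First bullet (local closedness of the domain).} I would fix $y \in C$ and set $\beta = h(y) < +\infty$. By continuity of $h$ relative to $C$ at $y$, applied with tolerance $1$, there is $\delta > 0$ such that $h(x) < \beta + 1$ for every $x \in C \cap \bar B(y,\delta)$. The key observation is that on this ball $C$ coincides with a sublevel set of $h$: on the one hand $C \cap \bar B(y,\delta) \subseteq \{h \le \beta + 1\} \cap \bar B(y,\delta)$ by the previous line, and on the other hand any point of $\{h \le \beta+1\}$ has finite value, hence lies in $\dom h = C$, so the two sets are equal. Since $h$ is lower semicontinuous, $\{h \le \beta+1\}$ is closed, and intersecting with the closed ball preserves closedness. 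Thus $C \cap \bar B(y,\delta)$ is closed, and $B = \bar B(y,\delta)$ is the desired nondegenerate ball.

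\emph{Second bullet (upper semicontinuity of the directional derivative).} I would fix $y,d$ with $y + d \in C$ and sequences $x_k \to y$, $d_k \to d$ with $x_k + d_k \in C$. For each $t \in (0,1]$, convexity of $C$ gives $x_k + t d_k = (1-t) x_k + t(x_k + d_k) \in C$, so $h(x_k + t d_k)$ is finite, and monotonicity of the convex difference quotient yields $h'(x_k,d_k) \le \frac{h(x_k + t d_k) - h(x_k)}{t}$. Taking $\limsup_k$ and invoking continuity of $h$ on $C$ — legitimate because $x_k + t d_k \to y + t d \in C$ and $x_k \to y \in C$ — produces $\limsup_k h'(x_k,d_k) \le \frac{h(y + t d) - h(y)}{t}$ for every $t \in (0,1]$. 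Letting $t \downarrow 0$ on the right-hand side, the quotient decreases to $h'(y,d)$, which gives the claimed inequality.

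\emph{Expected main obstacle.} The only genuine care is to guarantee that every point at which $h$ is evaluated actually lies in $C$, so that continuity on $C$ may be used to pass to the limit: this is exactly where convexity of $C$ (keeping the whole segment $[x_k, x_k+d_k]$ inside $C$) and of $h$ (monotone quotients, so that a single fixed $t$ bounds $h'$ from above) are needed. I would also be careful about the order of limits — first $k \to \infty$ at fixed $t$, then $t \downarrow 0$ — since it is the per-$t$ bound that survives, and its infimum over $t$ recovers $h'(y,d)$; the inequality remains valid even when $h'(y,d) = -\infty$. No boundedness of $C$ enters, so the lemma holds verbatim for closed domains.
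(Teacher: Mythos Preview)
The paper does not supply its own proof of this lemma; it is quoted verbatim from \cite[Lemma~1]{tseng1991relaxation} and used as a black box. Your argument is correct and self-contained, so there is nothing to compare against here.

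A few remarks on your write-up. For the first bullet, identifying $C \cap \bar B(y,\delta)$ with $\{h \le \beta+1\} \cap \bar B(y,\delta)$ via continuity on $C$ and then invoking lower semicontinuity is exactly the right mechanism, and it needs no convexity. For the second bullet, you are right that convexity of $h$ (hence of $C = \dom h$) must be added to the hypotheses as stated in the paper: without it the one-sided directional derivative need not exist, and the monotone-difference-quotient bound $h'(x_k,d_k) \le t^{-1}(h(x_k+td_k)-h(x_k))$ is unavailable. In the context of the paper $h$ is Legendre, so this is harmless, and the original Tseng--Bertsekas setting is convex as well. One tacit assumption you rely on is $x_k \in C$ for all $k$ (needed both for $h'(x_k,d_k)$ to be defined and for the convex-combination argument placing $x_k + t d_k$ in $C$); this is not written in the lemma's hypotheses but is forced by the statement making sense, so it is fine to assume. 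Your handling of the order of limits (first $k \to \infty$ at fixed $t$, then $t \downarrow 0$) is the correct one.
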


\begin{lemma}
				Let $C$ be a polyhedron and $y \in C$. Then there exists $\epsilon > 0$ such that for any $x \in C \cap B_\epsilon(y)$, $2x - y \in C$. 
				\label{lem:legendrePolytopeStrat}
\end{lemma}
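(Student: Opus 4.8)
The plan is to argue directly from an explicit half-space description of the polyhedron. I would write $C = \{z \in \RR^n : \langle a_i, z\rangle \leq b_i,\ i = 1, \dots, m\}$ for suitable $a_i \in \RR^n$ and $b_i \in \RR$, and split the index set according to the constraints active at $y$: set $I = \{i : \langle a_i, y\rangle = b_i\}$, so that $\langle a_i, y\rangle < b_i$ for every $i \notin I$.

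The heart of the argument is a one-line computation for the active constraints. For $i \in I$ and \emph{any} $x \in C$,
\[
\langle a_i, 2x - y\rangle = 2\langle a_i, x\rangle - \langle a_i, y\rangle = 2\langle a_i, x\rangle - b_i \leq 2 b_i - b_i = b_i,
\]
where the inequality uses $\langle a_i, x\rangle \leq b_i$. Thus the reflection $x \mapsto 2x - y$ automatically preserves the active constraints, with no restriction on $x$ needed.

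For the inactive constraints I would invoke continuity together with finiteness of the system. For each $i \notin I$ we have $\langle a_i, y\rangle < b_i$, and since $2x - y \to y$ as $x \to y$, there is $\epsilon_i > 0$ such that $\langle a_i, 2x - y\rangle < b_i$ whenever $\|x - y\| < \epsilon_i$. Setting $\epsilon = \min_{i \notin I} \epsilon_i > 0$ (any positive $\epsilon$ works if $I$ contains every index), we conclude: for each $x \in C \cap B_\epsilon(y)$ the point $2x - y$ satisfies all $m$ defining inequalities of $C$, hence $2x - y \in C$.

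The only real subtlety is the passage to a single $\epsilon$ valid for every inactive constraint simultaneously, which is exactly where finiteness of the inequality description is used; the active constraints demand nothing beyond the displayed identity, so I expect no genuine obstacle. For the locally polyhedral case needed in \Cref{th:polyhedraSufficient}, the same reasoning applies verbatim to the finitely many affine inequalities describing $C$ in a neighborhood of $y$, after first shrinking $\epsilon$ so that within $B_\epsilon(y)$ those are the only relevant inequalities.
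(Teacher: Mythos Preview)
Your proof is correct and follows essentially the same approach as the paper: both write $C$ as a finite intersection of half-spaces, split into active and inactive constraints at $y$, verify the active constraints via the identity $\langle a_i, 2x-y\rangle = 2\langle a_i, x\rangle - b_i \le b_i$, and handle the inactive ones by continuity and finiteness. The only cosmetic difference is that the paper chooses a single $\epsilon$ with $\|d\|\le 2\epsilon \Rightarrow \langle a_i, y+d\rangle < b_i$ for all $i\notin I$ at once, whereas you take a minimum over per-index $\epsilon_i$; this is immaterial.
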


\begin{proof}
				Let $a_1,\ldots,a_m \in \RR^n$ and $b_1,\ldots, b_m \in \RR$ such that $C = \{x | \left\langle a_i, x\right\rangle \leq b_i, \forall i=1,\ldots, m\}$. Fix $y \in C$, and $I \subset \{1,\ldots m\}$ the set of active indices, such that $\left\langle a_i , y \right\rangle = b_i$ if and only if $i \in I$. By continuity of linear functions, there exists $\epsilon > 0$ such that for all $d$ with $\|d\| \leq 2\epsilon$, and all $i \not \in I$, $\left\langle a_i , y+d \right\rangle < b_i$. Now for any $x \in C$ such that $\|x-y\| \leq \epsilon$, we have
				\begin{align*}
								\left\langle a_i, 2x - y\right\rangle &= \left\langle a_i, y + 2(x - y)\right\rangle < b_i & \forall i \not \in I,\\
								\left\langle a_i, 2x - y\right\rangle &= 2\left\langle a_i, x \right\rangle - \left\langle a_i, y\right\rangle \leq 2 b_i - \left\langle a_i, y\right\rangle = 2 b_i - b_i = b_i & \forall i \in I,
				\end{align*}
				which shows that $2x - y \in C$ and concludes the proof.
\end{proof}

\begin{proof}[of \Cref{th:polyhedraSufficient}]
				By continuity of $h$ on $C$, it suffices to show that $$\lim_{x \to y, x \in \mathrm{int} C } \left\langle \nabla h(x), y - x\right\rangle = 0$$ for all $y \in C$. 

				Fix $y \in C$ and consider $(x_k)_{k \in \NN}$, converging to $y$. $C$ can be locally represented by a polyhedron and therefore \Cref{lem:legendrePolytopeStrat} can be applied to $y$ and $C$. Let $\epsilon>0$ be given by \Cref{lem:legendrePolytopeStrat} and assume without loss of generality that $\|y -x_k\| \leq \epsilon$ for all $k \in \NN$. We have for all $k \in \NN$, $x_k + (y - x_k) = y \in C$ so that applying \Cref{lem:legendrePolytopeStrat} with $d_k = y - x_k$, which converges to $0$
				\begin{align*}
								{\lim\sup}_{k \to \infty} \left\langle \nabla h(x_k), y - x_k \right\rangle \leq 	h'(y,0) = 0.
				\end{align*}
				Furthermore, by \Cref{lem:legendrePolytopeStrat}, for all $k \in \NN$, we have $2x_k - y = x_k + (x_k - y) \in C$. Therefore, one may apply \Cref{lem:legendrePolytopeStrat} with $d_k = x_k - y$, which also converges to $0$ to obtain
				\begin{align*}
								{\lim\sup}_{k \to \infty} \left\langle \nabla h(x_k), x_k - y \right\rangle \leq 	h'(y,0) = 0.
				\end{align*}
				which is equivalent to
				\begin{align*}
								{\lim\inf}_{k \to \infty} \left\langle \nabla h(x_k), y - x_k\right\rangle \geq 	0.
				\end{align*}
				We have shown that
				\begin{align*}
								0 \leq {\lim\inf}_{k \to \infty} \left\langle \nabla h(x_k), y - x_k\right\rangle \leq {\lim\sup}_{k \to \infty} \left\langle \nabla h(x_k), x_k - y \right\rangle \leq  0,
				\end{align*}
				so that the limit is $0$. This concludes the proof.
\end{proof}
\section{Condition \eqref{eq:convA}}
One implication follows from \cite[Lemma 2.16]{kiwiel1997free}.
\begin{lemma}[Kiwiel]
				Let $C \subset \RR^n$ be convex closed with non-empty interior, let $h$ be Legendre, with domain $C$, continuous on $C$ and strictly convex on $C$, then \eqref{eq:convA} holds.
				\label{lem:kiwiel}
\end{lemma}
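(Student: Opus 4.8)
The plan is to exploit strict convexity through a simple midpoint estimate. Assume $(x_k)_{k\in\NN}\subset\mathrm{int}C$, $y\in C$, and $D_h(y,x_k)\to0$; we must show $x_k\to y$, and for this it suffices to prove that every cluster point of $(x_k)$ equals $y$, so fix a subsequence (not relabeled) with $x_k\to\bar x\in C$. The elementary fact I would isolate is: for every $x\in\mathrm{int}C$, writing $m=\tfrac12(x+y)\in\mathrm{int}C$, the gradient inequality for the convex function $h$ at the point of differentiability $x$ gives $h(m)\ge h(x)+\tfrac12\langle\nabla h(x),y-x\rangle$, hence $\langle\nabla h(x),y-x\rangle\le 2\bigl(h(m)-h(x)\bigr)$, and therefore
\begin{align*}
0\ \le\ h(x)+h(y)-2h(m)\ \le\ h(y)-h(x)-\langle\nabla h(x),y-x\rangle\ =\ D_h(y,x),
\end{align*}
where the left inequality is just convexity of $h$ evaluated at the midpoint $m$.

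Applying this with $x=x_k$ and $m_k=\tfrac12(x_k+y)$, the squeeze $0\le h(x_k)+h(y)-2h(m_k)\le D_h(y,x_k)\to0$ forces $h(x_k)+h(y)-2h(m_k)\to0$. Since $m_k\to\bar m:=\tfrac12(\bar x+y)\in C$, continuity of $h$ on $C$ yields $h(\bar x)+h(y)-2h(\bar m)=0$, i.e. $h\bigl(\tfrac{\bar x+y}{2}\bigr)=\tfrac12 h(\bar x)+\tfrac12 h(y)$. Strict convexity of $h$ on all of $C$ then forces $\bar x=y$, which is exactly what we wanted; hence the whole sequence converges to $y$ and \eqref{eq:convA} holds.

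A word on where care is needed. First, the midpoint $m_k$ always lies in $\mathrm{int}C$ (it is a convex combination with positive weight on the interior point $x_k$), so the gradient inequality is legitimately invoked at a point of differentiability and no boundary behaviour of $\nabla h$ intervenes; the hypotheses ``continuous on $C$'' and ``strictly convex on $C$'' (rather than only on $\mathrm{int}C$) enter solely in the final limiting step, which is also what makes strict convexity on all of $C$ the natural assumption. Second, the one genuine loose end is the existence of the cluster point $\bar x$: when $C$ is compact this is automatic since $(x_k)\subset C$, and this is the case relevant to \Cref{th:mainTheorem}; for unbounded closed $C$ one must additionally check that $D_h(y,x_k)\to0$ keeps $(x_k)$ bounded, which is the only real obstacle and is precisely the point where the compact and merely-closed cases diverge.
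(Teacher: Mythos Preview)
The paper does not give its own proof of this lemma: it simply attributes the result to Kiwiel and cites \cite[Lemma~2.16]{kiwiel1997free}. So there is no in-paper argument to compare against.

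Your midpoint argument is correct and pleasantly self-contained. The chain
\[
0 \ \le\ h(x_k)+h(y)-2h\!\left(\tfrac{x_k+y}{2}\right)\ \le\ D_h(y,x_k)
\]
is exactly right (left inequality: midpoint convexity; right inequality: gradient inequality at $x_k$ evaluated at the midpoint), and passing to the limit along a convergent subsequence, continuity of $h$ on $C$ plus strict convexity on $C$ force $\bar x=y$. One minor remark on your ``where care is needed'' paragraph: the fact that $m_k\in\mathrm{int}\,C$ is not what legitimises the gradient inequality --- that is applied at $x_k\in\mathrm{int}\,C$, and the target point $m_k$ only needs to lie in $C$. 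It is harmless, just slightly misplaced as a justification.

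The genuine gap you flag is real: the lemma as stated in the paper is for closed (not necessarily bounded) $C$, and your argument only shows that every cluster point equals $y$, which yields $x_k\to y$ provided $(x_k)$ is bounded. You are right that this suffices for the compact setting of \Cref{th:mainTheorem}, but as written your proof does not cover the full statement of the lemma; a separate coercivity-type step (ruling out $\|x_k\|\to\infty$ while $D_h(y,x_k)\to 0$) would be needed for unbounded $C$.
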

The reverse implication follows from the study of \textit{total convexity} in \cite[Proposition 1.2.6]{butnariu2000totally}.
We provide a self contained proof based on \Cref{lem:tsengBertsekas1987} for completeness.  
\begin{lemma}
				Let $C \subset \RR^n$ be convex closed with non-empty interior, let $h$ be Legendre, with domain $C$, continuous on $C$ such that \eqref{eq:convA} holds, then $h$ is strictly convex on $C$.
				\label{lem:kiwielReverse}
\end{lemma}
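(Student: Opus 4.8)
The plan is to prove the contrapositive: if $h$ is not strictly convex on $C$, then \eqref{eq:convA} fails. The first step is to locate an affine piece of $h$ sitting on the boundary. Since $h$ is Legendre it is already strictly convex on $\inte C$, so any failure of strict convexity on $C$ must involve boundary points: there are $a\neq b$ in $C$ and $\lambda_0\in(0,1)$ with $h((1-\lambda_0)a+\lambda_0 b)=(1-\lambda_0)h(a)+\lambda_0 h(b)$. A one-variable convexity argument (a convex function that meets its chord at an interior point coincides with that chord on the whole interval) upgrades this to: $h$ is affine on all of $[a,b]$. If the open segment $(a,b)$ met $\inte C$, restricting $h$ to a tiny sub-segment of $[a,b]$ around such an interior point would contradict strict convexity on $\inte C$; hence $(a,b)\subset\bd C$. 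Set $c=\frac{a+b}{2}\in\bd C$; affineness of $h$ on $[a,b]$ together with $c$ being the midpoint gives $h(a)+h(b)=2h(c)$.

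Next I would build the offending sequence. Fix any $x_0\in\inte C$, put $u=x_0-c$ and $x_k=(1-t_k)c+t_k x_0=c+t_k u$ for a sequence $t_k\downarrow 0$; then $x_k\in\inte C$ and $x_k\to c$. Summing the definitions of $D_h(a,x_k)$ and $D_h(b,x_k)$, using $a+b=2c$ and $h(a)+h(b)=2h(c)$, yields the parallelogram-type identity
\begin{align*}
D_h(a,x_k)+D_h(b,x_k)=2\,D_h(c,x_k).
\end{align*}
Since Bregman divergences are nonnegative, it therefore suffices to prove $D_h(c,x_k)\to 0$: this forces $D_h(a,x_k)\to 0$ while $x_k\to c\neq a$, which contradicts \eqref{eq:convA}.

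It remains to show $D_h(c,x_k)=h(c)-h(x_k)-\langle\nabla h(x_k),c-x_k\rangle\to 0$. Continuity of $h$ on $C$ gives $h(x_k)\to h(c)$, so the task reduces to $\langle\nabla h(x_k),c-x_k\rangle=h'(x_k,c-x_k)\to 0$; the difficulty — and the heart of the argument — is that $\|\nabla h(x_k)\|\to\infty$, so this is an indeterminate product. I would kill it with two applications of \Cref{lem:tsengBertsekas1987} at the base point $c$ with vanishing direction. With $d_k=c-x_k\to 0$ (and $x_k+d_k=c\in C$) it gives $\limsup_k h'(x_k,c-x_k)\le h'(c,0)=0$. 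The key second application exploits that $x_k$ lies on the segment $[c,x_0]$: then $2x_k-c=(1-2t_k)c+2t_k x_0\in C$ for $t_k$ small, so with $d_k=x_k-c\to 0$ we get $\limsup_k h'(x_k,x_k-c)\le h'(c,0)=0$, i.e.\ $\liminf_k h'(x_k,c-x_k)\ge 0$. Hence $\langle\nabla h(x_k),c-x_k\rangle\to 0$, so $D_h(c,x_k)\to 0$, and the proof is complete.

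The main obstacle is precisely this last step: preventing the blow-up of $\nabla h(x_k)$ from polluting the limit. Two structural facts make it go through, and I expect both to be needed — the parallelogram identity, which cancels the divergent component of $\nabla h(x_k)$ normal to $[a,b]$, and the placement of $x_k$ on a ray from $c$ through an interior point, which yields $2x_k-c\in C$ and hence a two-sided squeeze from \Cref{lem:tsengBertsekas1987}. (If one prefers a self-contained estimate in place of the second application, write $\psi(t)=h(c+tu)$; then $D_h(c,x_k)=\psi(0)-\psi(t_k)+t_k\psi'(t_k)$, and convexity of the finite function $\psi$ gives $\psi(t_k)-\psi(0)\le t_k\psi'(t_k)\le t_k\frac{\psi(1/2)-\psi(t_k)}{1/2-t_k}$, with both sides tending to $0$.)
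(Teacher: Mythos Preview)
Your argument is correct, and it shares the overall architecture of the paper's proof (contrapositive, locate an affine segment $[a,b]$ with midpoint $c$, push a sequence from an interior point $x_0$ toward $c$, and invoke \Cref{lem:tsengBertsekas1987}), but the core computation is organized differently. The paper works directly with the endpoint: it introduces an auxiliary sequence $y_k\in C$ with $y_k-z_k$ a scalar multiple of $z_k-x$ and $y_k-z_k\to y-z$, applies \Cref{lem:tsengBertsekas1987} twice with the \emph{nonzero} limiting directions $x-z$ and $y-z$, and combines these with the affine identity $h'(z,x-z)=-h'(z,y-z)$ to get $\langle\nabla h(z_k),x-z_k\rangle\to h'(z,x-z)$ and hence $D_h(x,z_k)\to0$. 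You instead split the problem via the parallelogram identity $D_h(a,x_k)+D_h(b,x_k)=2D_h(c,x_k)$, reducing everything to $D_h(c,x_k)\to0$; this you obtain from the same two-sided squeeze (with \emph{vanishing} directions, using $2x_k-c\in C$) that the paper employs in the proof of \Cref{th:polyhedraSufficient}. Your decomposition cleanly separates the role of affineness (the identity) from the limit computation, and your alternative one-dimensional estimate for $D_h(c,x_k)$ even bypasses \Cref{lem:tsengBertsekas1987} altogether, making that part fully self-contained. The observation that $(a,b)\subset\bd C$ is correct but not logically needed; it only explains why the limit is delicate.
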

\begin{proof}
				Toward a contradiction, assume that $h$ is not strictly convex. This means that there exists $x,y \in C$ such that $x \neq y$ and
				\begin{align}
								h\left(\frac{x+y}{2} \right) = \frac{h(x) + h(y)}{2},
								\label{eq:kiwielReverse1}
				\end{align}
				which implies that $h$ is affine along the segment $[x,y]$, that is
				\begin{align*}
					h\left( \frac{x + y}{2} + t(x - y) \right) = h\left( x \left(t + \frac{1}{2}\right) + y \left(\frac{1}{2} - t\right)  \right)  = \frac{h(x) + h(y)}{2} + t(h(x) - h(y))
				\end{align*}
				for all $t \in [-1/2,1/2]$.
				In particular, setting $z = \frac{x+y}{2}$, we have
				\begin{align}
								h'(z, x-z) = -h'(z, y-z) = \frac{h(x) - h(y)}{2}.
								\label{eq:kiwielReverse2}
				\end{align}
				Now consider $z_0 \in \mathrm{int}C$ and the sequence $(z_k)_{k\in \NN}$ in $\mathrm{int} C$, such that for all $k \in \NN$, $k \geq 1$, $z_k = \frac{1}{k} z_0 + \frac{k-1}{k} z \in C$.

				We set for all $k \in \NN$, $k \geq 1$, $y_k = \frac{2}{k+1} z_0 + \frac{k-1}{k+1} y \in C$. We have for all $k \geq 1$, 
				\begin{align*}
								z_k - x &= \frac{1}{k} z_0 + \frac{k-1}{2k} y +\left( \frac{k-1}{2k} - \frac{2k}{2k}\right)x =\frac{2}{2k} z_0 + \frac{k-1}{2k} y - \frac{k+1}{2k}x\\
								y_k - x &= \frac{2}{k+1} z_0 + \frac{k-1}{k+1} y - \frac{k+1}{k+1}x  = (z_k - x) \frac{2k}{k+1}\\
								y_k - z_k &= y_k - x + x - z_k = (z_k - x) \left(\frac{2k}{k+1} - 1\right)= \frac{k-1}{k+1} (z_k - x).\\
				\end{align*}
				Using \Cref{lem:tsengBertsekas1987}, we have
				\begin{align*}
								{\lim\sup}_{k \to \infty} \left\langle \nabla h(z_k), y_k - z_k   \right\rangle & = {\lim\sup}_{k \to \infty} \frac{k-1}{k+1} \left\langle \nabla h(z_k), z_k - x \right\rangle \\
								&= - {\lim\inf}_{k \to \infty} \left\langle \nabla h(z_k),x - z_k \right\rangle \\
								&\leq h'(z, y-z) = -h'(z, x-z), 
				\end{align*}
				so that
				\begin{align*}
								{\lim\sup}_{k \to \infty} \left\langle \nabla h(z_k), x - z_k \right\rangle&\leq h'(z, x-z) \leq {\lim\inf}_{k \to \infty} \left\langle \nabla h(z_k), x - z_k  \right\rangle
				\end{align*}
				and $\left\langle \nabla h(z_k), x - z_k \right\rangle \to h'(z, x-z)$ as $k \to \infty$.
				We deduce by continuity of $h$ using \eqref{eq:kiwielReverse1} and \eqref{eq:kiwielReverse2}
				\begin{align*}
								\lim_{k \to \infty} & h(x) - h(z_k) - \left\langle \nabla h(z_k), x - z_k\right\rangle =  h(x)-  \frac{h(x) + h(y)}{2}  - \frac{h(x) - h(y)}{2} = 0
				\end{align*}
				So we have that $z_k \to z \neq x$ but $D_h(x, z_k) \to 0$ which shows that condition \eqref{eq:convA} does not hold. This proves the result by contraposition.
\end{proof}

\section{Extensions}
\label{sec:extensions}
The proposed analysis is centered on a compact domain $C$ with $h$ continuous on its domain and $\mathrm{dom}\ h = C$. This allows to convey the main message in a simple form. \Cref{th:mainTheorem} has several direct extensions and calls for a broader discussion.

\subsection{Unbounded domain}
We notice that the arguments of \Cref{th:domainPolytope} and \Cref{th:polyhedraSufficient} do not require boundedness of $C$. Let us also point out that \Cref{th:mainTheorem} has the following consequence.
\begin{corollary}
	Let $C \subset \RR^n$ be convex, closed, with nonempty interior and $h$ be Legendre, with domain $C$, continuous on $C$. Then: 
	\begin{itemize}
		\item \eqref{eq:convB} holds if and only if $C$ is locally polyhedral: for any polytope $P$, $P \cap C$ is a polytope.
	\end{itemize}
	\label{cor:unbounded}
\end{corollary}
\begin{proof}
	If $P$ is contained in a strict affine subspace $A$ of $\RR^n$, then the restriction of $h$ to $A$ satisfy our hypothesis on $C \cap A$. Therefore it suffices to consider the full dimensional setting. Any polytope $P$ with non-empty interior admits a Legendre function $h_P$, with domain $P$, continuous on $P$ \footnote{For example using the well known Boltzman-Shannon entropy applied to the polyhedral representation of $P$. Set $\phi(t) = t \log(t)$ for $t>0$ and $\phi(0) = 0$, then if $P = \{x | \left\langle a_i, x\right\rangle \leq b_i, \forall i=1,\ldots, m\}$, then $h_P \colon x \mapsto \sum_{i=1}^m \phi(b_i - \left\langle a_i, x\right\rangle )$ satisfies the the desired property}. Then $h+h_P$ is Legendre with domain $C \cap P$, continuous on $C \cap P$. According to \Cref{th:mainTheorem}, Condition \eqref{eq:convB} holds true for $h+h_P$ if and only if $C \cap P$ is a polytope. Using the fact that $D_{h+h_P} = D_h + D_{h_P}$ we also notice that Condition \eqref{eq:convB} holds true for $h$ and $C$ if and only if Condition \eqref{eq:convB} holds true for $h+h_P$ and $C \cap P$, for all possible polytopes $P$ (with the construction of $h_P$ as above). This concludes the proof.
\end{proof}

\subsection{Bounded lower-semicontinuous $h$}
A carefull inspection of the proof of \Cref{th:domainPolytope} allows to conclude that the same result holds if $h$ is bounded on $C$, not necessarily continuous. Indeed, the conclusion of \Cref{lem:explodingBoundary} holds in this case (see \Cref{rem:hbounded}) and continuity of $h$ is not used further in the proof. This has the following consequence which shows that continuity of $h$ is essentially a requirement for condition \eqref{eq:convB}.
\begin{corollary}
	Let $C \subset \RR^n$ be convex compact with non-empty interior and $h$ be convex, lower-semicontinuous, Legendre, with domain $C$, bounded on $C$ and satisfy \eqref{eq:convB}. Then $h$ is continuous on $C$.
	\label{cor:boundedH}
\end{corollary}
\begin{proof}
	We deduce from \Cref{th:domainPolytope}, \Cref{rem:hbounded} and the preceeding discussion that $C$ is a polytope. Using the main result of \cite{gale1968convex}, $h$ is therefore upper semi-continuous on $C$ and hence continuous since it was also assumed to be lower-semicontinuous.
\end{proof}

The connection between Condition \eqref{eq:convA} and strict convexity of $h$ could also be discussed in light of potential relaxation of the continuity of $h$, we conjecture that continuity of $h$ is not necessary for the equivalence. 

\subsection{Unbounded $h$}
If $\mathrm{dom}\ h \neq C$, then $D_h(y,x)$ is only defined for $x \in \mathrm{int} C$ and $y \in \mathrm{dom}\ h$. Conditions \eqref{eq:convA} and \eqref{eq:convB} have no meaning if $y \not \in \mathrm{dom}\ h$ and it is unclear how to generalize them. It is also difficult to describe the behavior of the function $h$ outside of its domain, for example what would be the correct strict convexity notion on the boundary of $\mathrm{dom}\ h$. These represent important issues since it could be the case that the target solution set $S$ in \eqref{eq:mainProblem} is not contained in $\mathrm{dom}\ h$. Typical results in this setting relate to complexity estimates \cite{bauschke2016descent}, but in general, the sequential convergence of Bregman type algorithms probably represents a hard problem. One would expect positive results under specific structural assumptions such as barier functions constructed based on polyhedral representation (\textit{e.g.} the well known logarithmic barrier), in the spirit of the convergence of the central path for interior point methods.

\section*{Acknowledgements}
The author would like to thank J\'er\^ome Bolte for his continuous support and fruitful interactions and Jalal Fadili for pointing out the connection between sequential consistency and total convexity. The author also thank the anonymous referee for very relevant comments and suggestions on the first version of this work. The author thanks TSE-P.
This work supported by the AI Interdisciplinary Institute ANITI,  ANR-19-PI3A-0004, Air Force Office of Scientific Research, Air Force Material Command, USAF, FA8655-22-1-7012, ANR Regulia, and ANR Chess.


\begin{thebibliography}{apalike}

\bibitem{alvarez2004hessian}
Alvarez, F., Bolte, J. and Brahic, O. (2004). 
\newblock Hessian Riemannian gradient flows in convex programming. 
\newblock SIAM Journal on Control and Optimization, 43(2), 477--501.

\bibitem{auslender2006interior}
Auslender, A.,  Teboulle, M. (2006). 
\newblock Interior gradient and proximal methods for convex and conic optimization. 
\newblock SIAM Journal on Optimization, 16(3), 697-725.

\bibitem{azizian2022rate}
Azizian, W., Iutzeler, F., Malick, J., and Mertikopoulos, P. (2024). 
\newblock The Rate of Convergence of Bregman Proximal Methods: Local Geometry Versus Regularity Versus Sharpness. 
\newblock SIAM Journal on Optimization, 34(3), 2440-2471.



\bibitem{bauschke1997legendre}
Bauschke, H. H., and Borwein, J. M. (1997). 
\newblock Legendre functions and the method of random Bregman projections. 
\newblock Journal of Convex Analysis, 4(1), 27-67.

\bibitem{bauschke2003bregman}
Bauschke, H. H., Borwein, J. M.,  Combettes, P. L. (2003). 
\newblock Bregman monotone optimization algorithms. 
\newblock SIAM Journal on Control and Optimization, 42(2), 596-636.

\bibitem{bauschke2016descent}
Bauschke, H. H., Bolte, J. and Teboulle, M. (2016). 
\newblock A descent lemma beyond Lipschitz gradient continuity: first-order methods revisited and applications. 
\newblock Mathematics of Operations Research, 42(2), 330--348.


\bibitem{beck2003mirror}
Beck, A. and Teboulle, M. (2003). 
\newblock Mirror descent and nonlinear projected subgradient methods for convex optimization. 
\newblock Operations Research Letters, 31(3), 167--175.

\bibitem{bregman1967relaxation}
Bregman, L. M. (1967). 
\newblock The relaxation method of finding the common point of convex sets and its application to the solution of problems in convex programming. 
\newblock USSR Computational Mathematics and Mathematical Physics, 7(3), 200-217.

\bibitem{butnariu2000totally}
Butnariu, D., Iusem, A. N. (2000). 
\newblock Totally convex functions for fixed points computation and infinite dimensional optimization (Vol. 40). 
\newblock Springer Science \& Business Media.

\bibitem{butnariu2003uniform}
Butnariu, D., Iusem, A. N., Zalinescu, C. (2003). 
\newblock On uniform convexity, total convexity and convergence of the proximal point and outer bregman projection algorithm in banach spaces. 
\newblock Journal of Convex Analysis, 10(1), 35-62.

\bibitem{censor1981iterative}
Censor, Y.,  Lent, A. (1981). 
\newblock An iterative row-action method for interval convex programming. 
\newblock Journal of Optimization Theory and Applications, 34(3), 321-353.

\bibitem{censor1992proximal}
Censor, Y., Zenios, S. A. (1992). 
\newblock Proximal minimization algorithm withd-functions. 
\newblock Journal of Optimization Theory and Applications, 73(3), 451-464.

\bibitem{chen1993convergence}
Chen, G., Teboulle, M. (1993). 
\newblock Convergence analysis of a proximal-like minimization algorithm using Bregman functions. 
\newblock SIAM Journal on Optimization, 3(3), 538-543.

\bibitem{combettes2001fejer}
Combettes, P. L. (2001). 
\newblock Fej\'er-monotonicity in convex optimization. 
\newblock Encyclopedia of optimization, 2, 106-114.

\bibitem{eckstein1993nonlinear}
Eckstein, J. (1993). 
\newblock Nonlinear proximal point algorithms using Bregman functions, with applications to convex programming. 
\newblock Mathematics of Operations Research, 18(1), 202-226.

\bibitem{gale1968convex}
Gale, D., Klee, V. and Rockafellar, R. (1968). 
\newblock Convex functions on convex polytopes. 
\newblock Proceedings of the American Mathematical Society, 19(4), 867-873.

\bibitem{kiwiel1997free}
Kiwiel, K. C. (1997). 
\newblock Free-steering relaxation methods for problems with strictly convex costs and linear constraints. 
\newblock Mathematics of Operations Research, 22(2), 326-349.


\bibitem{newmirovsky1983problem}
A. S. Nemirovsky and D. B. Yudin (1983).
\newblock Problem Complexity and Method Efficiency in Optimization. 
\newblock Wiley-Interscience, New York. 

\bibitem{polyak1987introduction}
Polyak, B. T. (1987). 
\newblock Introduction to optimization. 
\newblock Optimization Software Inc., Publications Division, New York, 1, 32.

\bibitem{rockafellar1970convex}
Rockafellar, R. T. (1970). 
\newblock Convex Analysis.  
\newblock Princeton University Press.

\bibitem{sorin2023continuous}
Sorin, S. (2023). 
\newblock Continuous time learning algorithms in optimization and game theory. 
\newblock Dynamic Games and Applications, 13(1), 3-24.

\bibitem{tseng1991relaxation}
Tseng, P., Bertsekas, D. P. (1991). 
\newblock Relaxation methods for problems with strictly convex costs and linear constraints. 
\newblock Mathematics of Operations Research, 16(3), 462-481.



\end{thebibliography}
\end{document}